\def\LaTeX{\leavevmode L\raise.42ex
\hbox{\kern-.3em\size{\sf@size}{0pt}\selectfont A}\kern-.15em\TeX}
\def\@currentlabel{2.1}\label{e:dispaa}
\def\@currentlabel{2.21}\label{e:dispau}
\def\@currentlabel{2.22}\label{e:dispav}
\def\@currentlabel{2.23}\label{e:dispaw}
\def\@currentlabel{2.24}\label{e:dispax}
\def\theequation{\thesection.\@arabic\c@equation}
\newtheorem{thm}{Theorem}[section]
\newtheorem{lem}{Lemma}[section]
\newtheorem{prop}{Proposition}[section]
\newtheorem{defn}{Definition}[section]
\numberwithin{equation}{section}
\newcommand{\al}{\alpha}                \newcommand{\lda}{\lambda}
                \newcommand{\pa}{\partial}
\newcommand{\va}{\varepsilon}           \newcommand{\ud}{\mathrm{d}}
\newcommand{\be}{\begin{equation}}      \newcommand{\ee}{\end{equation}}
                \newcommand{\w}{\omega}
\def\dlim{\displaystyle\lim}
\def\dsup{\displaystyle\sup}
\def\dinf{\displaystyle\inf}
\begin{document}

\title[Harnack Inequality]{A Harnack inequality for fractional Laplace equations with lower order terms}

\author{Jinggang Tan}
\address{J.T., Departamento de  Matem\'atica,
Universidad T\'{e}cnica Federico Santa Mar\'{i}a,  Avda. Espa\~na 1680,
Valpara\'{\i}so, Chile}

\email{jinggang.tan@usm.cl}

\author{Jingang Xiong}
\address{J.X., School of Mathematical Sciences, Beijing Normal University, Beijing 100875, China}
\email{jxiong@mail.bnu.edu.cn}

%\subjclass[2000]{Primary 35K55; Secondary}

\thanks{ J.T. was supported
by Fondecyt \# 11085063.
J.X. was supported by CSC program for visiting Department of Mathematics, Rutgers University.}

%\keywords{}

\date{\today}

\begin{abstract}
We establish a Harnack inequality of fractional Laplace equations without imposing sign condition on the
coefficient of zero order term via the Moser's iteration and John-Nirenberg inequality.
\end{abstract}

\maketitle

\bigskip

\section{Introduction}
\bigskip

\noindent This note is devoted to a Harnack inequality of Laplace equations without imposing sign condition on the
coefficient of zero order terms.

The fractional Laplacians $(-\Delta)^{\sigma}$, $0<\sigma<1$, which are the infinitesimal
generators in stable L\'{e}vy stable processes, are given by
 the Fourier transform ${\mathcal{F}}$ as follows: for $u\in H^{\sigma}(\mathbb{R}^{n})$, $n\geq 2$,
 \begin{equation}\label{eqn-flap}
 {\mathcal{F}}((-\Delta)^{\sigma}u)(\xi):=|\xi|^{2\sigma}{\mathcal{F}}({u}(\xi))\;\xi\in \mathbb{R}^{n}.
 \end{equation}

Caffarelli and Silvestre  \cite{CS2} introduced fractional extension
 $v\in D^{1,2}_{\sigma}(\mathbb{R}_{+}^{n+1})$ of $v(x,0)=u(x)$   satisfying
 \begin{equation}\label{eqn-csi}
\int_{0}^{\infty}\int_{\mathbb{R}^{n}}|\nabla v(x,t)|^{2}\,t^{1-2\sigma}dtdx
=c_{\sigma}\int_{\mathbb{R}^{n}}|\xi|^{2\sigma}|{\mathcal{F}}(u)(\xi)|^{2}\,d\xi,
 \end{equation}
where $c_{\sigma}^{-1}=2^{-1}(4\pi)^{2\sigma}\Gamma(2-2\sigma)$. Then the
factional Laplacians are realized by the Dirichlet-Neumann map of $v$
\begin{equation}\label{eqn-DNm}
(-\Delta)^{\sigma}u(x)=-c_{\sigma}\lim\limits_{t\rightarrow
0} t^{1-2\alpha}v_{t}.
\end{equation}

Let $B_r\subset \mathbb{R}^n$ be the ball centered at origin with radius $r$. Our main result is
\begin{thm}\label{thm1}
Let $u \in H^{\sigma}(\mathbb{R}^{n})$ be nonnegative in $\mathbb{R}^n$ and $C^2(B_1)\cap C^{1}(\overline{B_{1}})$.
Suppose that $u(x)$ satisfies
\be\label{1.1}
(-\Delta)^\sigma u(x)=a(x)u(x)+b(x) \quad \mbox{in } B_1,
\ee
where $a(x), b(x)\in L^{\infty}(B_1)$. Then
\[
\sup_{B_{1/2}} u\leq C(\inf_{B_{1/2}} u+\|b\|_{L^\infty(B_1)}),
\]
where $C>0$ depends only on $n,\sigma,\|a\|_{L^\infty(B_1)}$.
\end{thm}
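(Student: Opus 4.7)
My approach is the classical two-sided Moser iteration combined with the John--Nirenberg lemma, implemented on the Caffarelli--Silvestre extension $v$ of $u$ to $\mathbb{R}^{n+1}_{+}$ from \eqref{eqn-csi}--\eqref{eqn-DNm}. That extension satisfies the degenerate elliptic equation $\operatorname{div}(t^{1-2\sigma}\nabla v)=0$ in $\mathbb{R}^{n+1}_{+}$ with conormal boundary condition $-c_{\sigma}\lim_{t\to 0^{+}}t^{1-2\sigma}v_{t}=a(x)u+b(x)$ on $B_{1}$, and since the weight $t^{1-2\sigma}$ is Muckenhoupt $A_{2}$, the weighted Sobolev, Poincar\'e, and $H^{1}$-trace inequalities on half-balls $B_{r}^{+}\subset\mathbb{R}^{n+1}_{+}$ are at our disposal. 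I first reduce to the homogeneous case: setting $\tilde u:=u+\|b\|_{L^{\infty}(B_{1})}$ and $\tilde a(x):=a(x)+\bigl(b(x)-a(x)\|b\|_{L^{\infty}(B_{1})}\bigr)/\tilde u(x)$, one has $(-\Delta)^{\sigma}\tilde u=\tilde a(x)\tilde u$ with $\tilde u$ strictly positive and $\|\tilde a\|_{L^{\infty}(B_{1})}\le 1+2\|a\|_{L^{\infty}(B_{1})}$, so it suffices to prove the scale-invariant estimate $\sup_{B_{1/2}}\tilde u\le C\inf_{B_{1/2}}\tilde u$.

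\textbf{Step 1: Caccioppoli estimate and Moser iteration.} Plugging the test function $\varphi=v^{2\beta-1}\eta^{2}$ (with $\beta\in\mathbb{R}\setminus\{1/2\}$ and $\eta$ a smooth cutoff supported in $B_{r}^{+}$) into the weak formulation and controlling the resulting boundary integral $\int_{B_{r}}\tilde a\,v^{2\beta}\eta^{2}\,dx$ via the weighted trace inequality, I expect to obtain a Caccioppoli estimate of the form
\[
\int_{B_{r}^{+}}t^{1-2\sigma}|\nabla(v^{\beta}\eta)|^{2}\,dtdx \le C(1+\beta^{2})\int_{B_{r}^{+}}t^{1-2\sigma}v^{2\beta}\bigl(|\nabla\eta|^{2}+\eta^{2}\bigr)\,dtdx.
\]
Combined with the weighted Sobolev embedding, iterating $\beta\to+\infty$ yields, for every $p>0$,
\[
\sup_{B_{1/2}}u \le C\Bigl(\int_{B_{3/4}^{+}}t^{1-2\sigma}v^{2p}\,dtdx\Bigr)^{1/(2p)},
\]
and iterating $\beta\to-\infty$ yields the matching lower bound
\[
\Bigl(\int_{B_{3/4}^{+}}t^{1-2\sigma}v^{-2p}\,dtdx\Bigr)^{-1/(2p)} \le C\inf_{B_{1/2}}u.
\]

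\textbf{Step 2: Logarithmic estimate and conclusion.} The borderline test function $\varphi=v^{-1}\eta^{2}$ produces
\[
\int_{B_{r}^{+}}t^{1-2\sigma}|\nabla\log v|^{2}\eta^{2}\,dtdx \le C\int_{B_{r}^{+}}t^{1-2\sigma}|\nabla\eta|^{2}\,dtdx+C\int_{B_{r}}\eta^{2}\,dx,
\]
which, together with the weighted Poincar\'e inequality and the trace theorem, gives a uniform $\B$-norm bound for $\log u$ on $B_{3/4}$. The John--Nirenberg inequality then supplies a $p_{0}>0$ with
\[
\Bigl(\int_{B_{3/4}}u^{p_{0}}\,dx\Bigr)\Bigl(\int_{B_{3/4}}u^{-p_{0}}\,dx\Bigr)\le C.
\]
Applying the trace inequality at exponent $p_{0}$ to pass from the bulk weighted integrals in Step 1 to the corresponding boundary integrals and chaining these three estimates produces $\sup_{B_{1/2}}u\le C\inf_{B_{1/2}}u$, which, undoing the initial reduction, is exactly Theorem~\ref{thm1}.

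\textbf{Main obstacle.} The absence of a sign hypothesis on $a$ is precisely what prevents the usual trick of discarding the boundary integral $\int_{B_{r}}\tilde a\,v^{2\beta}\eta^{2}\,dx$ in the Caccioppoli step, and I expect this to be the main technical point: the term must be absorbed for \emph{every} exponent $\beta$ used in both iterations. The mechanism for handling it is the scaling of the weighted trace inequality on small half-balls, which produces a small prefactor allowing the ``gradient'' part of the trace bound to be absorbed into the weighted Dirichlet energy on the left-hand side; a standard covering and scaling argument then propagates the resulting local estimate back to the fixed ball $B_{1/2}$.
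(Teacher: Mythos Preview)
Your overall strategy---Caffarelli--Silvestre extension, two-sided Moser iteration in the weighted bulk, and a John--Nirenberg bridge---is exactly the route the paper takes, including the key mechanism you single out for the sign-free $a$: the boundary integral $\int \tilde a\,v^{2\beta}\eta^{2}$ is absorbed via the weighted trace inequality (Lemma~\ref{prop2.3}), with the small $\varepsilon$-prefactor killing the gradient piece. Your upfront reduction $\tilde u=u+\|b\|_{L^{\infty}}$ is also morally what the paper does, though there it is carried out inside the iteration by working with $\overline U=U^{+}+k$, $k=\|b\|_{L^{\infty}}$.

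There is, however, one genuine mismatch in your chaining. Your Step~1 produces \emph{bulk} weighted $L^{p}$-norms of $v$ on $B_{3/4}^{+}$, while your Step~2 produces a \emph{boundary} estimate $\bigl(\int_{B_{3/4}}u^{p_{0}}\bigr)\bigl(\int_{B_{3/4}}u^{-p_{0}}\bigr)\le C$. The trace inequality only controls boundary norms by bulk norms, not the reverse, so ``applying the trace inequality at exponent $p_{0}$'' does not let you pass from $\int_{B_{3/4}^{+}}t^{1-2\sigma}v^{2p_{0}}$ down to $\int_{B_{3/4}}u^{p_{0}}$; the inequality points the wrong way. The paper avoids this by running the logarithmic/BMO step entirely in the weighted bulk: from the Caccioppoli estimate for $\log v$ it concludes $\log\overline U\in\B(t^{1-2\sigma}\,\ud X)$ on $Q_{\tau}$, and then invokes the \emph{weighted} John--Nirenberg inequality of Muckenhoupt--Wheeden to obtain
\[
\Bigl(\int_{Q_{\tau}}t^{1-2\sigma}\overline U^{\,p_{0}}\Bigr)\Bigl(\int_{Q_{\tau}}t^{1-2\sigma}\overline U^{-p_{0}}\Bigr)\le C,
\]
which chains directly with the bulk Moser estimates. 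Replacing your boundary John--Nirenberg by this weighted bulk version (or, alternatively, reworking Step~1 so that the iteration terminates at boundary $L^{p}$-norms) closes the gap.
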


To prove it, we establish a Harnack inequality for the equivalent problem as follow.

Let $X=(x,t)\in \mathbb{R}^{n+1}$, $Q_R=B_R\times (0,R)\subset \mathbb{R}^{n+1}$ and $\pa' Q_R=B_{R}\times \{0\}$.
Define
\[
H(t^{1-2\sigma},Q_R):=\left\{U\in H^1(Q_R): \int_{Q_R}t^{1-2\sigma}(U^2+|\nabla U|^2)\,\ud X<\infty \right\}.
\]
\begin{thm}\label{thm3.1}
 Let $U\in H(t^{1-2\sigma},Q_1)$ be nonnegative solution $C^{2}(Q_{1})\cap C^{1}(\overline{Q_1})$ of
\begin{equation}\label{eqn-ep}
\begin{cases}
\mathrm{div}(t^{1-2\sigma}\nabla U(X))= 0\quad &\mbox{in } Q_1 \\
-\dlim_{t\rightarrow 0^+}t^{1-2\sigma}\pa_tU(x,t)= a(x)U(x,0)+b(x)\quad  &\mbox{on } \pa' Q_1.
\end{cases}
\end{equation}
Suppose $a, b\in L^{\infty}(B_1)$. Then
\[
 \dsup_{\overline Q_{1/2}}U\leq C(\dinf_{\overline Q_{1/2}}U+\|b\|_{L^\infty(B_1)}),
\]
where $C>0$ depends only on $n,\sigma$ and $\|a\|_{L^\infty(B_1)}$.
\end{thm}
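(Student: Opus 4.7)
The plan is to run a Moser iteration on the degenerate elliptic extension problem (\ref{eqn-ep}), exploiting the fact that $t^{1-2\sigma}$ is an $A_2$ Muckenhoupt weight for $0<\sigma<1$, and to bridge the resulting sup and weak Harnack estimates via the John--Nirenberg inequality. First I would reduce to the homogeneous boundary problem by the substitution $\bar U:=U+\|b\|_{L^\infty(B_1)}$: it still satisfies $\mathrm{div}(t^{1-2\sigma}\nabla\bar U)=0$ in $Q_1$, and its conormal derivative on $\pa' Q_1$ takes the form $-\dlim_{t\to 0^+}t^{1-2\sigma}\pa_t\bar U=\tilde a(x)\bar U$ with $\|\tilde a\|_{L^\infty(B_1)}\leq \|a\|_{L^\infty(B_1)}+1$. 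Moreover $\bar U\geq \|b\|_{L^\infty(B_1)}$, so powers (positive and negative) and $\log\bar U$ are well defined; proving the Harnack inequality for $\bar U$ then gives the claim for $U$.

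Next I would test the equation against $\eta^{2}\bar U^{2p-1}$ for $p\neq 0$, with $\eta$ a smooth cutoff supported in $Q_R$ and equal to one on $Q_r$ ($1/2\leq r<R\leq 1$). Integration by parts on $Q_R$ combined with the boundary condition yields a Caccioppoli-type estimate of the form
\[
\int_{Q_R} t^{1-2\sigma}\big|\nabla(\eta \bar U^{p})\big|^2\,\ud X\leq \frac{C(p)}{(R-r)^2}\int_{Q_R} t^{1-2\sigma}\bar U^{2p}\,\ud X + C(p)\|\tilde a\|_{L^\infty(B_1)}\int_{B_R}\eta^{2}\bar U^{2p}\,\ud x.
\]
Combined with the weighted Sobolev inequality on $H(t^{1-2\sigma},Q_R)$ (standard from the $A_2$ property) and the accompanying weighted trace inequality from $Q_R$ into $B_R$, a Moser iteration over $p=\gamma^{k}\to\infty$ delivers
\[
\dsup_{\overline{Q_{1/2}}}\bar U\leq C\Big(\int_{Q_1} t^{1-2\sigma}\bar U^{p_0}\,\ud X\Big)^{1/p_0}\qquad\text{for any fixed }p_0>0,
\]
while the same scheme applied to negative exponents gives a matching lower bound for $\dinf_{\overline{Q_{1/2}}}\bar U$ in terms of a negative-moment integral of $\bar U$ over $Q_1$.

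To bridge the two one-sided bounds I would test the equation against $\eta^{2}/\bar U$ to derive a weighted $BMO$-type control on $\log\bar U$ over $Q_{3/4}$; the John--Nirenberg inequality then produces some $p_0>0$ such that the integrals $\int_{Q_{3/4}}t^{1-2\sigma}\bar U^{\pm p_0}\,\ud X$ are comparable, closing the gap between the sup and weak Harnack estimates. The main technical obstacle is the careful handling of the zeroth-order boundary term $\tilde a(x)\bar U$ without a sign hypothesis on $a$: it has to be absorbed into the bulk Dirichlet form through the weighted trace inequality with a small constant, with the $p$-dependence of the resulting constants controlled uniformly enough for the geometric Moser iteration to converge. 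This is also where the precise form of the weighted Sobolev and trace embeddings for $t^{1-2\sigma}$ plays the decisive role, ensuring that $\|a\|_{L^\infty(B_1)}$ appears only in the final multiplicative constant $C$.
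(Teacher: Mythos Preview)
Your proposal is correct and follows essentially the same route as the paper: the paper also proves Theorem~\ref{thm3.1} by a Moser iteration using the weighted Sobolev inequality for the $A_2$ weight $t^{1-2\sigma}$, absorbs the sign-free boundary term $a(x)U$ via the trace inequality (Lemma~\ref{prop2.3}) with a small parameter, and bridges the positive- and negative-exponent estimates through a weighted $BMO$ bound on $\log\bar U$ and the John--Nirenberg inequality. Minor differences are cosmetic (the paper truncates $\bar U$ at level $m$ to justify the test functions and treats the negative exponents by applying the subsolution estimate to $V=\bar U^{-1}$ rather than iterating directly with $p<0$), and your reduction $\bar U=U+\|b\|_{L^\infty(B_1)}$ with $\tilde a=(aU+b)/\bar U$ is exactly the one used in the paper's weak Harnack step.
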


The main feature is that we do not assume the sign condition of $a(x)$.
Previously, in the case $a(x)\equiv 0$,
Bass and Levin \cite{Bas02} establish the Harnack inequality for nonnegative
functions of a class of  symmetric stable processes that are harmonic
 with respect to these processes, see also \cite{ChenSong98} by Chen and Song.
 The analytic method was given by Caffarelli and Silvestre  \cite{CS2},
 by employing the fractional extension of fractional harmonic functions.

 We here establish the Harnack inequality as in Theorem \ref{thm3.1} by the Moser iteration.
 The proof bases on the properties of the weighted Sobolev space developed by Fabes, Kenig and Serapioni \cite{FKS}
 and the John-Nirenberg inequality in $A_{2}$ weighted $BMO$ space obtained by  Muchenhoupt and Wheeden \cite{MW}.

 If $\sigma=\frac{1}{2}$, the result is due to Han and Li \cite{HL}.
 After we complete our manuscript, we observe that
 Theorem \ref{thm3.1},  the Harnack inequality for $b\equiv 0$,
 has been shown recently  by Cabre  and Sire \cite{CS10} through
 making even extension and using the result
 of Fabes, Kenig and Serapioni~\cite{FKS}.
 But our proof has independent interest.

On the other hand, since the fractional Laplacian is a nonlocal operator,
the condition $u\geq 0$ in $\mathbb{R}^n$ cannot be relaxed to
$u\geq 0$ in $B_1$. In fact, we need all information in the complement
of $B_{1}$.  For example, see an counterexample of the case $a\equiv b \equiv 0$ in \cite{K}  by Kassmann.
By the Dirichlet-Neumann map, we transform (\ref{1.1}) to the local problem in $\mathbb{R}^{n+1}_{+}$,
which grantees the identity (\ref{eqn-csi}).
 The nonnegative assumption of $u$ implies that its fractional extension $v$ is nonnegative
 in the half space $\mathbb{R}^{n+1}_{+}$. Thus, $v\ge 0$ in all of cubes $Q_{R}, R>0$.
 Therefore, we can obtain the desired Harnack inequality by studying the local version (\ref{eqn-ep}).

The paper is organized as follows. In Section \ref{s2},
 we demonstrate some properties in the weighted Sobolev spaces.
The proofs of Theorem \ref{thm1}, \ref{thm3.1} are given in Section~\ref{s3}.

\bigskip

\section{Preliminaries}
\label{s2}
\bigskip

In this section, we shall present some important weighted inequalities.

Denote $Q_R=B_R\times (0,R)\subset \mathbb{R}^{n+1}$, $\pa' Q_R=B_{R}\times \{0\}$ and $\pa'' Q_R=\pa Q_R\setminus \pa'Q_R$.
We use capital letters like $X=(x,t), Y=(y,s)$ to represent points in $\mathbb{R}^{n+1}$.

Let us recall the definition of $A_2$ class.

\begin{defn}
 Let $\w(X)$ be a nonnegative measurable function in $\mathbb{R}^{n+1}$. We say $\w$ being of the class $A_2$ if
there exists a constant $C_{\w}$ such that for any ball $B\subset \mathbb{R}^{n+1}$
\[
 \left(\frac{1}{|B|}\int_{B}\w(X)\,\ud X\right)\left(\frac{1}{|B|}\int_{B}\w^{-1}(X)\,\ud X\right)\leq C_\w,
\]
where $|\cdot|$ is the Lebesgue measure.
\end{defn}

\begin{lem}\label{prop2.1}
Let $f(X)\in C^1_c(Q_R\cup \pa' Q_R)$ and $\w(X)\in A_2$.
Then there exist constants $C$ and $\delta>0$ depending only $n$ and $C_\w$ such that for any $1\leq k\leq \frac{n+1}{n}+\delta$
\be \label{2.1}
\left(\frac{1}{\w(Q_R)}\int_{Q_R}|f|^{2k}\w \,\ud X\right)^{1/2k}\leq C R \left(\frac{1}{\w(Q_R)}\int_{Q_R}|\nabla f|^{2}\w \,\ud X\right)^{1/2},\ee
where $\w(Q_R)=\int_{Q_R}\w(X)\,\ud X$.
\end{lem}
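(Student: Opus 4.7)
The plan is to reduce the inequality to the standard $A_{2}$-weighted Sobolev inequality of Fabes--Kenig--Serapioni \cite{FKS} on a ball, by means of an even reflection of $f$ across the flat face $\pa' Q_R=\{t=0\}$, since $f$ is only required to vanish near $\pa''Q_R$ and may be nonzero on $\pa'Q_R$.

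First I would introduce the even extension
\[
\tilde f(x,t):=f(x,|t|),\qquad (x,t)\in \tilde Q_R:=B_R\times(-R,R),
\]
extended by zero to $\mathbb{R}^{n+1}\setminus \tilde Q_R$. Because $f\in C^{1}_c(Q_R\cup \pa' Q_R)$ vanishes in a neighborhood of $\pa'' Q_R$, the extension $\tilde f$ is Lipschitz on $\mathbb{R}^{n+1}$ (the two halves match continuously at $t=0$) and has compact support inside the open cylinder $\tilde Q_R$; its weak gradient satisfies $|\nabla \tilde f(x,t)|=|\nabla f(x,|t|)|$ a.e. Next I would pick a Euclidean ball $B^{*}\subset\mathbb{R}^{n+1}$ centered at the origin of radius $\rho\le c_{n}R$ with $\tilde Q_R\subset B^{*}$. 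Since $\tilde f$ is Lipschitz with compact support in $B^{*}$, the FKS weighted Sobolev inequality applies on $B^{*}$, yielding, for all $1\le k\le \tfrac{n+1}{n}+\delta$,
\[
\left(\frac{1}{\w(B^{*})}\int_{B^{*}}|\tilde f|^{2k}\w\,\ud X\right)^{1/2k}\le C\rho\left(\frac{1}{\w(B^{*})}\int_{B^{*}}|\nabla\tilde f|^{2}\w\,\ud X\right)^{1/2},
\]
with $\delta>0$ and $C$ depending only on $n$ and $C_{\w}$.

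It then remains to translate the inequality back to $Q_R$. Doubling of the $A_{2}$ weight $\w$ gives $\w(B^{*})\le C_{1}\w(Q_R)$; monotonicity yields $\int_{B^{*}}|\tilde f|^{2k}\w\ge\int_{Q_R}|f|^{2k}\w$; and splitting $\tilde Q_R=Q_R\cup Q_R^{-}$ with $Q_R^{-}:=B_R\times(-R,0)$ and applying the change of variables $t\mapsto -t$ on $Q_R^{-}$ produces
\[
\int_{B^{*}}|\nabla\tilde f|^{2}\w\,\ud X=\int_{Q_R}|\nabla f|^{2}\bigl(\w(x,t)+\w(x,-t)\bigr)\,\ud X.
\]

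The main obstacle is absorbing the unwanted term $\w(x,-t)$. In every application of this lemma the weight is $\w(x,t)=|t|^{1-2\sigma}$, which is even in $t$, so the right-hand side collapses to $2\int_{Q_R}|\nabla f|^{2}\w\,\ud X$ and \eqref{2.1} follows at once. For a generic $A_{2}$ weight on $\mathbb{R}^{n+1}$ I would instead carry out the argument with the even symmetrization $\tilde\w(x,t):=\w(x,|t|)$, which is itself $A_{2}$ with constant controlled by $C_{\w}$ (verified ball by ball using the $t\mapsto -t$ symmetry of Euclidean balls and doubling of $\w$) and which agrees with $\w$ on $Q_R$; the three preceding steps then go through verbatim with $\tilde\w$ in place of $\w$, and rearranging the resulting estimate completes the proof.
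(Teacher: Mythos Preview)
Your argument is essentially correct, but it follows a genuinely different route from the paper's proof.

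The paper does not reflect at all. Instead it observes that the only ingredient in the FKS proof of the weighted Sobolev inequality that is domain-sensitive is the pointwise Riesz potential bound
\[
|f(X)|\le \frac{2}{\w_n}\int_{Q_R}\frac{|\nabla f(Y)|}{|X-Y|^{n}}\,\ud Y,\qquad X\in Q_R,
\]
and it proves this directly on the half-cylinder. The trick is to integrate $f$ along rays $t\mapsto X-\xi t$ with $\xi$ ranging only over the \emph{south} hemisphere $\mathbb{S}^n_-$, so that the rays move upward into $\mathbb{R}^{n+1}_+$; since $f$ vanishes near $\pa'' Q_R$, extending $f$ by zero in $\mathbb{R}^{n+1}_+\setminus Q_R$ makes the fundamental theorem of calculus legitimate, and averaging over the half-sphere gives the estimate with the factor $2/\w_n$. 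Once this Riesz bound is in hand, the remainder of the FKS argument (the weighted fractional-integral estimates, which are insensitive to the shape of the domain) runs verbatim. In particular the weight $\w$ is never reflected or symmetrized, and the lemma holds for an arbitrary $A_2$ weight with no extra work.

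Your reflection approach also works, but it trades the half-sphere integration trick for an auxiliary lemma: that the even symmetrization $\tilde\w(x,t)=\w(x,|t|)$ of an $A_2$ weight is again $A_2$ with comparable constant. Your parenthetical hint is on the right track---for a ball $B$ straddling $\{t=0\}$ the reflected ball $B'$ satisfies $B'\subset 3B$, so $\int_B\tilde\w\le 2\,\w(3B)$ and doubling closes the estimate---but this is a genuine side claim that the paper's argument avoids entirely. Your approach has the advantage of quoting FKS as a black box on a ball; the paper's approach has the advantage of being self-contained for general $\w\in A_2$ without any symmetrization step.
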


\begin{proof}
 The proof of this Lemma is similar to that of Theorem 1.2 in \cite{FKS}. The following inequality is the only thing we need to show.
\be \label{2.2}
|f(X)|\leq \frac{2}{\w_{n}}\int_{Q_R}\frac{|\nabla f(Y)|}{|X-Y|^{n}}\,\ud Y,\quad \mbox{for any } X\in Q_R,
\ee where $\w_n$ is the area of the sphere $\mathbb{S}^n$.

Extend $f$ to be zero outside $Q_R$. Let $X\in Q_R$, then (\ref{2.2}) follows from
\be \label{2.3}
f(X)=\frac{2}{\w_{n}}\int_{\mathbb{R}_-^{n+1}}\frac{\nabla f(X-Y)\cdot Y}{|Y|^{n+1}}\,\ud Y.
\ee
Since $X-Y\in \mathbb{R}^{n+1}_+$, $\nabla f(X-Y)$ makes sense.
Let $\xi\in \mathbb{S}^{n}_-$, the south half sphere. For $t>0$, note that
\[f(X)=\int^\infty_0-\frac{\pa}{\pa t}f(X-\xi t)\,\ud t=\int^\infty_0\nabla f(X-\xi t)\cdot \xi\,\ud t.
\]
We integrate the above over $\xi$ ranging on the south half sphere. This gives
\[
f(X)= \frac{2}{\w_{n}}\int_{\xi\in \mathbb{S}^{n}
_-}\int_0^\infty \nabla f(X-\xi t)\cdot \xi\,\ud t\ud \xi.
\] Identity (\ref{2.3}) follows from coordinate changing.
\end{proof}

Next we quote the following weighted Poincar\'e inequality which can be found in \cite{FKS}.

\begin{lem} \label{prop2.2} Let $f\in C^{1}(Q_R)$, then any $1\leq k\leq \frac{n}{n-1}+\delta$, we have
\[
 \left(\frac{1}{\w(Q_R)}\int_{Q_R}|f-f_{R,\w}|^{2k}\w\,\ud X\right)^{1/2k} \leq CR
 \left(\frac{1}{\w(Q_R)}\int_{Q_R}|\nabla f|^{2}\w\,\ud X\right)^{1/2},
\]
where $f_{R,\w}=\frac{1}{\w(Q_R)}\int_{Q_R}f\w$.
\end{lem}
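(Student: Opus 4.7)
The plan is to mirror the scheme of Lemma~\ref{prop2.1}: first establish a pointwise Riesz-potential bound for $|f(X)-f_{R,\w}|$ on $Q_R$, then appeal to the Muckenhoupt--Wheeden weighted norm inequality for the fractional integral of order one. The role played by the compact support in Lemma~\ref{prop2.1} is now taken over by the subtraction of the weighted average $f_{R,\w}$, which is what allows the argument to work for functions that do not vanish on $\pa''Q_R$.

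The pointwise step would begin with the identity
\[
f(X) - f_{R,\w} \;=\; \frac{1}{\w(Q_R)}\int_{Q_R}\bigl(f(X)-f(X')\bigr)\,\w(X')\,\ud X',
\]
followed by the fundamental theorem of calculus along the segment $[X,X']\subset Q_R$ (which is convex) and the change of variables $Y=X+s(X'-X)$, $s\in(0,1)$. Reversing the order of integration extracts the kernel $|X-Y|^{-n}$ from the Jacobian together with the factor $|X'-X|$, while the leftover factor $\w(X+(Y-X)/s)/\w(Q_R)$ must be bounded uniformly after integrating in $s$. This is exactly where the $A_2$ hypothesis enters: the doubling (and self-improving) property of $A_2$ weights absorbs the weight distortion introduced by the change of variables. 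One thus obtains
\[
|f(X)-f_{R,\w}| \;\leq\; C\int_{Q_R}\frac{|\nabla f(Y)|}{|X-Y|^n}\,\ud Y
\]
for every $X \in Q_R$, with $C=C(n,C_\w)$.

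With this pointwise estimate, the lemma reduces to the weighted mapping property of the Riesz potential of order one in $\mathbb{R}^{n+1}$. Since $\w\in A_2$, the reverse H\"older self-improvement of Muckenhoupt weights provides a $\delta>0$ depending only on $n$ and $C_\w$ for which $I_1 g(X)=\int|X-Y|^{-n}g(Y)\,\ud Y$ is bounded from $L^2(\w)$ into $L^{2k}(\w)$ over $Q_R$ for every $k$ in the stated range; applying this to $g=|\nabla f|\chi_{Q_R}$, normalizing by $\w(Q_R)$, and rescaling $R\to 1$ produces the factor $R$ on the right-hand side. The main obstacle, therefore, is the pointwise step: controlling the weight distortion $\w(X+(Y-X)/s)/\w(Q_R)$ under the change of variables, which requires careful use of $A_2$-doubling. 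Once this bookkeeping is in place, the second step is a direct appeal to the classical Muckenhoupt--Wheeden theory and the remaining computations are routine.
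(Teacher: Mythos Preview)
The paper gives no proof of this lemma: it is simply quoted from Fabes--Kenig--Serapioni \cite{FKS}. Your two-step plan (pointwise Riesz-potential bound, then the weighted mapping property of $I_1$) is exactly the scheme of \cite{FKS}, so in outline you are reproducing the intended argument.

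The gap is in your pointwise step. Carrying the \emph{weighted} average through the change of variables and applying Fubini, the inner integral in $s$ becomes (with $\theta=(Y-X)/|Y-X|$ and $\rho=|Y-X|/s$)
\[
\frac{1}{\w(Q_R)}\int_{|Y-X|}^{\rho_{\max}(\theta)}\rho^{\,n}\,\w(X+\rho\theta)\,\ud\rho,
\]
and you need this bounded by a constant depending only on $n$ and $C_\w$. But this is a \emph{line} integral of $\w$ along a single ray, while the $A_2$ condition (and doubling) controls averages of $\w$ over balls. An $A_2$ weight may concentrate near a hyperplane; already for the weight of this paper, $\w(X)=|t|^{1-2\sigma}$ with $\sigma>1/2$, take $X=(x,t_0)$ and any $Y$ at the same height $t_0$: the ray is horizontal, the integral is $\sim R^{n+1}t_0^{-(2\sigma-1)}$, and after dividing by $\w(Q_R)\sim R^{n+2-2\sigma}$ one gets $(R/t_0)^{2\sigma-1}\to\infty$ as $t_0\to0$. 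So the assertion that ``$A_2$-doubling absorbs the weight distortion'' fails.

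The repair is the one used in \cite{FKS}: run the pointwise step with the \emph{Lebesgue} average $f_R=|Q_R|^{-1}\int_{Q_R}f$ instead of $f_{R,\w}$. Then no weight enters the change of variables and the classical computation gives $|f(X)-f_R|\le C\,I_1(|\nabla f|\chi_{Q_R})(X)$. Your second step yields the inequality with $f_R$ on the left; since
\[
|f_{R,\w}-f_R|\,\w(Q_R)^{1/2k}=\Bigl|\tfrac{1}{\w(Q_R)}\int_{Q_R}(f-f_R)\,\w\Bigr|\,\w(Q_R)^{1/2k}\le\|f-f_R\|_{L^{2k}(\w,Q_R)}
\]
by H\"older, the version with $f_{R,\w}$ follows at the cost of a factor $2$.
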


Finally, we prove the following trace embedding result.

\begin{lem}\label{prop2.3}
 Let $f(X)\in C^1_c(Q_R\cup \pa' Q_R)$ and $\al\in (-1,1)$. Then
there exists a positive constant $\delta$ depending only on $\al$ such that
\be\label{2.4} \int_{\pa'Q_R}|f|^2\leq \va\int_{Q_R}|\nabla f|^2t^{\al}+\frac{C(R)}{\va^{\delta}} \int_{Q_R}| f|^2t^{\al},\ee
for any $\va>0$.
\end{lem}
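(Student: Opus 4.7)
The plan is to prove the pointwise inequality
\[
|f(x,0)|^2 \leq \varepsilon \int_0^R |\partial_t f(x,t)|^2 t^\alpha \, \ud t + \frac{C(R)}{\varepsilon^\delta}\int_0^R |f(x,t)|^2 t^\alpha \, \ud t
\]
for each $x \in B_R$, and then integrate over $B_R$. Since $|\partial_t f|^2 \leq |\nabla f|^2$, this yields (\ref{2.4}).

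The starting point is the fundamental theorem of calculus: for any $t_0 \in (0,R]$,
\[
f(x,0) = f(x,t_0) - \int_0^{t_0} \partial_t f(x,t)\, \ud t,
\]
which gives $|f(x,0)|^2 \leq 2|f(x,t_0)|^2 + 2\bigl(\int_0^{t_0}|\partial_t f|\, \ud t\bigr)^2$. Applying Cauchy--Schwarz against the weight $t^\alpha$, and using $\alpha<1$ to guarantee that $\int_0^{t_0}t^{-\alpha}\,\ud t = \frac{t_0^{1-\alpha}}{1-\alpha}$ is finite, I bound
\[
\Bigl(\int_0^{t_0}|\partial_t f|\, \ud t\Bigr)^2 \leq \frac{t_0^{1-\alpha}}{1-\alpha}\int_0^R|\partial_t f|^2 t^\alpha\, \ud t.
\]

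Next, I multiply the resulting estimate by $t_0^\alpha$ and integrate $t_0$ over $(0,\tau)$ for some $\tau\leq R$ to be chosen. The condition $\alpha>-1$ makes $\int_0^\tau t_0^\alpha\, \ud t_0 = \tau^{\alpha+1}/(\alpha+1)$ finite, and the cross factor becomes $\int_0^\tau t_0^\alpha\cdot t_0^{1-\alpha}\, \ud t_0 = \tau^2/2$. After dividing by $\tau^{\alpha+1}/(\alpha+1)$ I obtain
\[
|f(x,0)|^2 \leq \frac{2(\alpha+1)}{\tau^{\alpha+1}}\int_0^R|f|^2 t^\alpha\, \ud t + \frac{(\alpha+1)\,\tau^{1-\alpha}}{1-\alpha}\int_0^R|\partial_t f|^2 t^\alpha\, \ud t.
\]

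Finally I calibrate $\tau$: take $\tau = \min\{R,\,\tau_\varepsilon\}$ with $\tau_\varepsilon$ solving $(\alpha+1)\tau_\varepsilon^{1-\alpha}/(1-\alpha)=\varepsilon$. In the regime $\tau_\varepsilon\leq R$ the coefficient of $\int|\partial_t f|^2 t^\alpha$ is exactly $\varepsilon$ and that of $\int|f|^2 t^\alpha$ becomes $C\,\varepsilon^{-\delta}$ with $\delta=(\alpha+1)/(1-\alpha)$; in the opposite regime $\varepsilon$ is already bounded below by a constant depending on $R$ and $\alpha$, so the estimate holds with $C(R)$ on both terms, which can be absorbed as claimed. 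Integrating in $x\in B_R$ completes the proof.

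The main thing to keep track of is that the two hypotheses on $\alpha$ play distinct roles ($\alpha<1$ is needed for the Cauchy--Schwarz step to converge near $t=0$, while $\alpha>-1$ is needed when averaging in $t_0$), and that $\delta=(\alpha+1)/(1-\alpha)$ blows up as $\alpha\uparrow 1$, consistent with the weighted trace theorem being sharp.
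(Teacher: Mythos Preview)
Your averaging argument is correct and clean in the main regime $\tau_\va\leq R$, and this is the only range of $\va$ actually used later in Proposition~\ref{lem3.1}. There is, however, a small gap in the complementary case: when $\va>\va_0:=(\al+1)R^{1-\al}/(1-\al)$ you take $\tau=R$ and obtain the fixed coefficients $\va_0$ and $2(\al+1)/R^{\al+1}$ in front of the gradient and function terms respectively. The first is indeed $\leq\va$, but the second cannot be bounded by $C(R)/\va^{\delta}$ uniformly as $\va\to\infty$, so ``absorbed as claimed'' does not go through. The fix is easy: combine your $\tau=R$ estimate with the weighted Sobolev inequality of Lemma~\ref{prop2.1} (or with (\ref{2.6})) to get $\int_{\pa'Q_R}|f|^2\leq C_*(R,\al)\int_{Q_R}|\nabla f|^2\,t^{\al}$, which covers all $\va\geq C_*$ outright; for $\va$ in the bounded window $(\va_0,C_*)$ one then simply enlarges $C(R)$.

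Your route differs substantially from the paper's. The paper first proves an unweighted trace bound (\ref{2.5}) for general $p$, then the Hardy-type inequality (\ref{2.6}) controlling $\int_{Q_R}|f|^2 t^{\lda}$ by $\int_{Q_R}|\nabla f|^2 t^{\al}$ for any $\lda>-1$, and splices them together via H\"older with an auxiliary exponent $p\in(1,2/(1+\al))$. That argument works uniformly for all $\va>0$ without a separate large-$\va$ patch, but yields the exponent $\delta=1+2/(p-1)>(3+\al)/(1-\al)$. Your direct averaging in $t_0$ avoids (\ref{2.6}) entirely in the interesting small-$\va$ regime and produces the sharper value $\delta=(1+\al)/(1-\al)$, at the cost of the extra step above for large $\va$.
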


\begin{proof}
For any  $1<p<\infty$, we have
\be \label{2.5}
\begin{split}
 \int_{\pa' Q_R}|f|^p&= -\int_{Q_R}\pa_t|f|^p=-\int_{Q_R} p |f|^{p-1}\mbox{sgn}f \pa_tf
\\&\leq \va \int_{Q_R}|\nabla f|^p+C\va^{-\frac{1}{p-1}}\int_{Q_R}|f|^p.
\end{split}
\ee

Next, we claim for $0<\al<1$ and any $\lda>-1$
\be\label{2.6}
\int_{Q_R}|f|^2t^{\lda}\leq C(\lda,\al)\int_{Q_R}|\nabla f|^2t^{\al}.
\ee
In fact, by the H\"older inequality
\[
 \begin{split}
f^2(x,t)&= (\int^R_t\pa_tf(x,s)\,\ud s)^2\leq \int^R_ts^{-\al}\,\ud s\int^R_t|\pa_t f|^2s^\al\,\ud s\\
&\le \frac{C}{1-\al}\int_{0}^R|\nabla f(x,s)|^2s^\al\,\ud s.
 \end{split}
\]
Multiplying the above by $t^\lda$ and integrating over $Q_R$, we obtain
\[
\begin{split}
 \int_{Q_R} t^\lda f^2 &\leq C\int_0^Rt^\lda\,\ud t \int_{B_R} \int_{0}^R|\nabla f(x,s)|^2s^\al\,\ud s\,\ud x\\&
\leq C\int_{Q_R} |\nabla f(x,s)|^2 s^\al,
\end{split}
\]
so (\ref{2.6}) follows.

We are going to prove (\ref{2.4}). Let $p\in (1,\frac{2}{1+\al})$. It follows from (\ref{2.5}) and the H\"older inequality that
\[
\begin{split}
 \int_{\pa' Q_R}|f|^2&=  \int_{\pa' Q_R}(|f|^{\frac{2}{p}})^p
\\&\leq \va \int_{Q_R}|\nabla f^{\frac{2}{p}}|^p+C\va^{-\frac{1}{p-1}}\int_{Q_R}|f|^2\\&
=\va(\frac{2}{p})^p \int_{Q_R}|f|^{2-p}t^{-\frac{p\al}{2}}|\nabla f|^pt^{\frac{p\al}{2}} +C\va^{-\frac{1}{p-1}}\int_{Q_R}|f|t^{-\frac{\al}{2}}|f|t^{\frac{\al}{2}}\\&
\leq \va(\frac{2}{p})^p \left(\int_{Q_R}|f|^2t^{-\frac{p\al}{2-p}}\right)^{\frac{2-p}{2}}\Big(\int_{Q_R}|\nabla f|^2t^{\al}\Big)^{\frac{p}{2}}\\&\quad
+C\va^{-\frac{1}{p-1}}\int_{Q_R}\{ \va^{1+\frac{1}{p-1}} |f|^2t^{-\al} +\va^{-1-\frac{1}{p-1}}|f|^2t^{\al}\}\\&
\leq \va C \int_{Q_R}|\nabla f|^2t^{\al} +\frac{C}{\va^{1+\frac{2}{p-1}}}\int_{Q_R}|f|^2t^{\al},
\end{split}
\]
where we used (\ref{2.6}) for $\lda=-\frac{p\al}{2-p}>-1$ and $\lda=-\al>-1$ in the last inequality.
Therefore, we complete the proof.
\end{proof}

\bigskip

\section{Proof of Theorem \ref{thm1}}
\label{s3}
\bigskip

In this section, we will prove the main results by making use of the Moser's iteration.

For $p\in (0,\infty)$ denote
\[
 \| U\|_{L^{p}(t^{1-2\sigma},Q_R)}:=\left(\int_{Q_R}t^{1-2\sigma} U^{p}\right)^{\frac{1}{p}}.
\]

\begin{prop} \label{lem3.1}
Let $U(X)\in H(t^{1-2\sigma},Q_1)$ be a weak solution of
\be\label{3.1}
\begin{cases}
\mathrm{div}(t^{1-2\sigma}\nabla U(X))\geq 0\quad &\mbox{in } Q_1 \\
-\dlim_{t\rightarrow 0^+}t^{1-2\sigma}\pa_tU(x,t)\leq a(x)U(x,0)+b(x)\quad  &\mbox{on } \pa' Q_1.
\end{cases}
\ee
Then
\[
\dsup_{Q_{1/2}}U^+\leq C(\|U^+\|_{L^{2}(t^{1-2\sigma},Q_R)}+\|b\|_{L^{\infty}(B_1)}),
\]
where $U^+=\max\{0,U\}$, and $C>0$ depends only on $n,\sigma, \|a\|_{L^{\infty}(B_1)}$.
\end{prop}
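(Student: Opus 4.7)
My plan is to run Moser's iteration on a nested sequence of cylinders $Q_{r_k}$ with $r_k = 1/2 + 2^{-k-1}$, adapted to the degenerate weight $t^{1-2\sigma}$ and the Neumann-type datum $aU+b$ on $\partial' Q_1$. To absorb the inhomogeneity $b$ and the sign-indefinite $a$ into a power iteration, I would work with $\bar U := U^+ + \|b\|_{L^\infty(B_1)}$, which satisfies $aU + b \leq (\|a\|_{L^\infty(B_1)} + 1)\bar U$ on $\{U \geq 0\}$ and whose gradient equals $\nabla U$ on $\{U>0\}$ and vanishes elsewhere.

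The first step is a weighted Caccioppoli inequality. For $\beta \geq 1$ and a cutoff $\eta \in C_c^\infty(Q_{r_k}\cup\partial' Q_{r_k})$ with $\eta \equiv 1$ on $Q_{r_{k+1}}$, $|\nabla\eta| \lesssim (r_k - r_{k+1})^{-1}$, and $\eta = 0$ on $\partial'' Q_{r_k}$, I test the subsolution inequality \eqref{3.1} against $\varphi = \eta^2 \bar U^\beta \geq 0$. Integration by parts together with the Neumann condition gives
\[
\int_{Q_{r_k}} t^{1-2\sigma} \nabla U \cdot \nabla\varphi \,\ud X \leq \int_{\partial' Q_{r_k}} (aU + b)\,\eta^2 \bar U^\beta.
\]
Expanding $\nabla\varphi$ and using Young's inequality on the cross term $\nabla\bar U\cdot\nabla\eta$ together with $|aU+b| \leq (\|a\|_\infty + 1)\bar U$, one obtains
\[
\beta \int_{Q_{r_k}} t^{1-2\sigma} \eta^2 \bar U^{\beta-1} |\nabla\bar U|^2 \,\ud X \leq C \int_{Q_{r_k}} t^{1-2\sigma} |\nabla\eta|^2 \bar U^{\beta+1} \,\ud X + C\int_{\partial' Q_{r_k}}\eta^2 \bar U^{\beta+1}.
\]
The boundary integral is then handled by Lemma \ref{prop2.3} applied to $\eta \bar U^{(\beta+1)/2}$ with $\alpha = 1-2\sigma \in (-1,1)$: choosing $\varepsilon$ a fixed small constant absorbs a fraction of the weighted gradient on the left and leaves only a bulk term $\lesssim \int t^{1-2\sigma}\bar U^{\beta+1}$.

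Setting $w := \bar U^{(\beta+1)/2}$, so that $|\nabla w|^2 \sim (\beta+1)^2 \bar U^{\beta-1}|\nabla\bar U|^2$, the Caccioppoli estimate rearranges to
\[
\int_{Q_{r_k}} t^{1-2\sigma} |\nabla(\eta w)|^2 \,\ud X \leq \frac{C(\beta+1)^2}{(r_k - r_{k+1})^2}\int_{Q_{r_k}} t^{1-2\sigma} \bar U^{\beta+1} \,\ud X.
\]
Since $t^{1-2\sigma}$, extended evenly across $\{t=0\}$, is an $A_2$ weight, Lemma \ref{prop2.1} applied to $\eta w$ yields a reverse H\"older inequality with a fixed exponent $\chi = \frac{n+1}{n}+\delta > 1$:
\[
\left(\int_{Q_{r_{k+1}}} t^{1-2\sigma} \bar U^{(\beta+1)\chi}\right)^{1/\chi} \leq \frac{C(\beta+1)^2}{(r_k-r_{k+1})^2}\int_{Q_{r_k}} t^{1-2\sigma} \bar U^{\beta+1}.
\]
Choosing $\beta_k + 1 = 2\chi^k$ and iterating in the familiar Moser fashion produces $\sup_{Q_{1/2}}\bar U \leq C\|\bar U\|_{L^{2}(t^{1-2\sigma},Q_1)}$, which, together with $\bar U = U^+ + \|b\|_{L^\infty(B_1)}$, is the claimed estimate.

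The main obstacle is the boundary integral on $\partial' Q_1$: because the weight $t^{1-2\sigma}$ degenerates or blows up at $t=0$, it cannot be absorbed by the interior Dirichlet energy through a naive trace theorem, and this is exactly where Lemma \ref{prop2.3} does the essential work. Beyond that, some care is needed to check that the constant $C(\beta+1)^2/(r_k-r_{k+1})^2$ compounds into a convergent product after $\log_\chi$-many iteration steps, and that the exponent $\delta$ produced by Lemma \ref{prop2.1} is the same at every stage; everything else is standard Moser bookkeeping.
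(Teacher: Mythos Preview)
Your plan is the same as the paper's: Moser iteration with the trace inequality (Lemma~\ref{prop2.3}) to absorb the boundary integral and the weighted Sobolev inequality (Lemma~\ref{prop2.1}) for the reverse H\"older step. Two points need correction before the argument goes through.

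First, in the absorption step you say ``choosing $\varepsilon$ a fixed small constant''. This fails: after writing $w=\bar U^{(\beta+1)/2}$, the gradient term produced by Lemma~\ref{prop2.3} is of order $\varepsilon(\beta+1)^2\int t^{1-2\sigma}\eta^2\bar U^{\beta-1}|\nabla\bar U|^2$, while the left-hand side carries only the coefficient $\beta$. Absorption therefore forces $\varepsilon\sim(1+\beta)^{-1}$, and the penalty $C\varepsilon^{-\delta}$ then contributes a factor $(1+\beta)^{\delta}$ to the bulk term. The paper does exactly this; the resulting power of $(1+\beta)$ is $(1+\beta)^{\delta}$ (for some $\delta>1$) rather than your $(1+\beta)^{2}$, though any fixed polynomial power still iterates.

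Second, your test function $\varphi=\eta^2\bar U^\beta$ does not vanish on $\{U\le 0\}$: there $\bar U=k$ is constant, so $\nabla\varphi=k^\beta\nabla(\eta^2)$ and the pairing $\int_{\{U\le 0\}}t^{1-2\sigma}\nabla U\cdot\nabla\varphi$ is an uncontrolled cross term in $\nabla U$ (not $\nabla\bar U$). The paper avoids this by subtracting the constant, taking $\phi=\eta^2(\bar U_m^\beta\bar U-k^{\beta+1})$, which vanishes identically on $\{U\le 0\}$; the truncation $\bar U_m$ is also used to guarantee $\phi\in H(t^{1-2\sigma},Q_1)$ a priori. Both are standard fixes, but without them the Caccioppoli step as you wrote it does not close.
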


\begin{proof}
Let $k,m>0$ be some constants. Set $\overline U=U^++k$ and
\[
  \overline U_m=
\begin{cases}
\overline{U}&\quad \mbox{if } U<m,\\
k+m &\quad \mbox{if } U\geq m.
\end{cases}
\]
Consider the test function
\[
 \phi=\eta^2(\overline U_m^\beta\overline U-k^{\beta+1})\in H(t^{1-2\sigma}, Q_1),
\]
for some $\beta\geq 0$ and some nonnegative function $\eta\in C^1_c(Q_1\cup \pa' Q_1)$. Clearly, $\nabla \overline U_m=0$
in $\{U<0\}$ and $\{U\geq m\}$. A direct calculation yields
\[
\begin{split}
 \nabla \phi &=\beta \eta^2 \overline U_m^{\beta-1}\nabla \overline U_m\overline u+\eta^2 \overline U^\beta_m\nabla \overline U
+2\eta \nabla \eta(\overline U^\beta_m\overline U-k^{\beta+1})\\&
=\eta^2 \overline U^\beta_m(\beta \nabla \overline U_m+\nabla \overline U)++2\eta \nabla \eta(\overline U^\beta_m\overline U-k^{\beta+1}).
\end{split}
\]
Multiplying (\ref{3.1}) by $\phi$ and integrating by parts, we have
\be\label{3.2}
\begin{split}
 0&\leq -\int_{Q_1}t^{1-2\sigma}\nabla U\nabla \phi + \int_{\pa'Q_1}a(x)U\phi+b(x)\phi\\
&=-\int_{Q_1}t^{1-2\sigma}\eta^2\overline U^\beta_m(\beta|\nabla \overline U_m|^2+|\nabla \overline U|^2)
-2\int_{Q_1}t^{1-2\sigma}\eta (\overline U^\beta_m \overline U-k^{\beta+1})\nabla \eta\nabla \overline U\\&
\quad +\int_{\pa'Q_1}a(x)U \eta^2(\overline U_m^\beta\overline U-k^{\beta+1})+b(x)\eta^2(\overline U_m^\beta\overline U-k^{\beta+1})\\&
\leq -\frac{1}{2}\int_{Q_1}t^{1-2\sigma}\eta^2\overline U^\beta_m(\beta|\nabla \overline U_m|^2+|\nabla \overline U|^2)+
4\int_{Q_1}t^{1-2\sigma}\overline U^\beta_m \overline U^2|\nabla \eta|^2\\&
\quad +\int_{\pa'Q_1}|a(x)| \eta^2\overline U_m^\beta\overline U^2+|b(x)|\eta^2\overline U_m^\beta\overline U,
\end{split}
\ee
where we used the Cauchy inequality and the fact
$\overline U_m^\beta\overline U-k^{\beta+1}< \overline U_m^\beta\overline U$. Choosing $k=\|b\|_{L^\infty(B_1)}$ if
$b$ is not identically zero. Otherwise choose an arbitrary $k>0$ and eventually let $k\rightarrow 0$.
Then we see that $|b(x)|\eta^2\overline U_m^\beta\overline U\leq \eta^2\overline U_m^\beta\overline U^2$. Hence (\ref{3.2}) gives
\[
\begin{split}
 &\int_{Q_1}t^{1-2\sigma}\eta^2\overline U^\beta_m(\beta|\nabla \overline U_m|^2+|\nabla \overline U|^2)\\&
\leq 8\int_{Q_1}t^{1-2\sigma}\overline U^\beta_m \overline U^2|\nabla \eta|^2+2(\|a\|_{L^\infty(B_1)}+1)\int_{\pa'Q_1} \eta^2\overline U_m^\beta\overline U^2.
\end{split}
\]

Set $W=\overline U^{\frac{\beta}{2}}_m\overline U$. Then
\[
 |\nabla W|^2\leq (1+\beta)(\beta \overline U_m^\beta|\nabla \overline U_m|^2+\overline U_m^\beta|\nabla \overline U|^2).
\]
Therefore, we have
\[
 \int_{Q_1}t^{1-2\sigma}\eta^2|\nabla W|^2\leq C(1+\beta)\left\{ \int_{Q_1}t^{1-2\sigma}W^2|\nabla \eta|^2+ \int_{\pa'Q_1}\eta^2 W^2\right\},
\]
or
\[
 \int_{Q_1}t^{1-2\sigma}|\nabla(\eta W)|^2\leq C(1+\beta)\left\{ \int_{Q_1}t^{1-2\sigma}W^2|\nabla \eta|^2+ \int_{\pa'Q_1}\eta^2 W^2\right\}.
\]

By Lemma \ref{prop2.3},
\[
 C(1+\beta)\int_{\pa'Q_1}\eta^2 W^2\leq \frac{1}{2}\int_{Q_1}t^{1-2\sigma}|\nabla(\eta W)|^2 +C(1+\beta)^{\delta}\int_{Q_1}t^{1-2\sigma}\eta^2 W^2
\] for some $\delta>1$ depending on $n,\sigma$. It follows that
\[
 \int_{Q_1}t^{1-2\sigma}|\nabla(\eta W)|^2\leq C(1+\beta)^\delta \int_{Q_1}t^{1-2\sigma}(\eta^2+|\nabla \eta|^2)W^2.
\]
By the Sobolev inequality, see Lemma \ref{prop2.2}, we obtain
\[
 \left(\int_{Q_1}t^{1-2\sigma}|\eta W|^{2\chi}\right)^{\frac{1}{\chi}}\leq C(1+\beta)^\delta \int_{Q_1}t^{1-2\sigma}(\eta^2+|\nabla \eta|^2)W^2,
\]
where $\chi=\frac{n+1}{n}>1$. For any $0<r<R\leq 1$, consider an $\eta\in C_c(Q_1\cup \pa'Q_1)$ with $\eta=1$ in $Q_r$ and $|\nabla \eta|\leq 2/(R-r)$.
Thus we have
\[
 \left(\int_{Q_r}t^{1-2\sigma} W^{2\chi}\right)^{\frac{1}{\chi}}\leq C\frac{(1+\beta)^\delta}{(R-r)^2} \int_{Q_R}t^{1-2\sigma}W^2.
\]
or, by the definition of $W$,
\[
 \left(\int_{Q_r}t^{1-2\sigma}\overline U^{\beta\chi}_m \overline U^{2\chi}\right)^{\frac{1}{\chi}}
\leq C\frac{(1+\beta)^\delta}{(R-r)^2} \int_{Q_R}t^{1-2\sigma}\overline U^{\beta}_m \overline U^{2}.
\]
Noting that $\overline{U}_m\leq \overline{U}$, we get
\[
 \left(\int_{Q_r}t^{1-2\sigma}\overline U^{\gamma\chi}_m\right)^{\frac{1}{\chi}}
\leq C\frac{(1+\beta)^\delta}{(R-r)^2} \int_{Q_R}t^{1-2\sigma} \overline U^{\gamma}
\]
provided the integral in the right hand side is bounded. By letting $m\rightarrow \infty$, we conclude that
\[
 \|\overline U\|_{L^{\gamma\chi}(t^{1-2\sigma},Q_r)}\leq \left(C\frac{(1+\beta)^\delta}{(R-r)^2}\right)^{\frac{1}{\gamma}}
\|\overline U\|_{L^{\gamma}(t^{1-2\sigma},Q_R)},
\]
where $C>0$ is a constant depending only $n,\sigma, \|a\|_{L^{\infty}(B_1)}$. As in standard Moser iterating procedure, we then arrive at
\[
 \dsup_{Q_{1/2}}\overline U\leq C\|\overline U\|_{L^2(t^{1-2\sigma}, Q_1)}
\]
or
\[
 \dsup_{Q_{1/2}} U^+\leq C(\| U^+\|_{L^2(t^{1-2\sigma}, Q_1)}+k).
\]
Recalling the definition of $k$, we complete the proof.
\end{proof}

The next lemma is so called weak Harnack inequality.

\begin{prop} \label{lem3.2}
Let $U(X)\in H(t^{1-2\sigma},Q_1)$ be a nonnegative weak solution of
\be\label{3.4}
\begin{cases}
\mathrm{div}(t^{1-2\sigma}\nabla U(X))\leq 0\quad &\mbox{in } Q_1 \\
-\dlim_{t\rightarrow 0^+}t^{1-2\sigma}\pa_tU(x,t)\geq a(x)U(x,0)+b(x)\quad  &\mbox{on } \pa' Q_1.
\end{cases}
\ee
Then for some $p>0$ and any $0<\theta<\tau<1$ we have
\[
\dinf_{\overline Q_{\theta}}U+\|b\|_{L^\infty(Q_1)} \geq C\|U\|_{L^{p}(t^{1-2\sigma},Q_\tau)},
\]
where $C>0$ depends only on $n,\sigma,\theta,\tau, \|a\|_{L^{\infty}(Q_1)}$.
\end{prop}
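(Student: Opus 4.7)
The plan is to adapt the classical Moser strategy to this degenerate setting, combining an iteration on negative powers of $U$ with a logarithmic Caccioppoli estimate and the weighted John--Nirenberg inequality of Muckenhoupt--Wheeden, exactly to bridge between negative and positive $L^p$ norms.

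First, set $\overline U = U + k$ with $k=\|b\|_{L^\infty(B_1)}$ (or $k>0$ arbitrary if $b\equiv 0$, eventually sent to $0$). I would test \eqref{3.4} against $\phi = \eta^2 \overline U^\beta$ for $\beta < -1$, in direct parallel with Proposition \ref{lem3.1}. For $\beta<0$ the quadratic term $\beta\int t^{1-2\sigma}\eta^2 \overline U^{\beta-1}|\nabla\overline U|^2$ enters with the favorable sign, and the bound $|aU+b|\leq (\|a\|_{L^\infty(B_1)}+1)\overline U$ controls the boundary contribution. With $W=\overline U^{(\beta+1)/2}$ and Lemma \ref{prop2.3} absorbing the trace term, one obtains
\[
\int_{Q_R} t^{1-2\sigma}|\nabla(\eta W)|^2 \leq C(1+|\beta+1|)^\delta \int_{Q_R} t^{1-2\sigma}(\eta^2+|\nabla\eta|^2)W^2.
\]
Substituting $Z = \overline U^{-1}$ turns this into the standard Moser pattern for $Z^{|\beta+1|}$, so Lemma \ref{prop2.1} and the usual geometric iteration yield
\[
\inf_{\overline{Q_\theta}} \overline U \geq C\left(\int_{Q_{\tau_0}} \overline U^{-q_0}\,t^{1-2\sigma}\right)^{-1/q_0}
\]
for some $q_0>0$ and some $\theta<\tau_0<\tau$.

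The remaining and decisive step is to replace the negative $L^{q_0}$-average by a positive one. For this I would take the borderline test $\phi = \eta^2 \overline U^{-1}$, whose leading term is $\int t^{1-2\sigma}\eta^2|\nabla\log\overline U|^2$; again using $|aU+b|\overline U^{-1}\leq \|a\|_{L^\infty(B_1)}+1$ and Lemma \ref{prop2.3} to handle the trace, I would arrive at
\[
\int_{Q_R} t^{1-2\sigma}\,\eta^2 |\nabla \log\overline U|^2 \leq C \int_{Q_R} t^{1-2\sigma}(\eta^2+|\nabla\eta|^2).
\]
An even reflection across $\{t=0\}$ turns the weight into $|t|^{1-2\sigma}$, a genuine $A_2$ weight on $\mathbb{R}^{n+1}$, and the weighted Poincar\'e inequality of Lemma \ref{prop2.2} converts the gradient bound into the oscillation estimate $\log\overline U \in BMO_\omega(Q_{\tau'})$ with $\tau_0<\tau'<\tau$. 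The John--Nirenberg inequality of Muchenhoupt--Wheeden then produces a $p_0\in (0,q_0]$ such that
\[
\left(\frac{1}{\omega(Q_{\tau'})}\int_{Q_{\tau'}}\overline U^{p_0}\omega\right)\left(\frac{1}{\omega(Q_{\tau'})}\int_{Q_{\tau'}}\overline U^{-p_0}\omega\right) \leq C,
\]
where $\omega=t^{1-2\sigma}$. Chaining this inequality with the Moser bound from the first step (applied with exponent $-p_0$) gives $\|\overline U\|_{L^{p_0}(\omega,Q_{\tau'})} \leq C \inf_{\overline{Q_\theta}}\overline U$, and since $\overline U = U+k$ and we may let $k\to \|b\|_{L^\infty(B_1)}$, the desired inequality follows with $p=p_0$.

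I expect the main obstacle to be the logarithmic step: one has to verify that the Caccioppoli estimate for $\log\overline U$ really gives BMO membership uniformly across all subballs (not just $Q_{\tau'}$ itself), since the John--Nirenberg theorem requires a ball-by-ball oscillation bound. The nonlocal, nonsymmetric boundary term coming from $a(x)U+b(x)$ is precisely where the absence of a sign condition on $a$ could in principle cause trouble, so the device of absorbing it via $|aU+b|\overline U^{-1}\leq \|a\|_{L^\infty(B_1)}+1$ (made possible by the shift $\overline U=U+k$) is what makes the whole scheme succeed; checking that this absorption is uniform in the size of the subcube, together with the even reflection that legitimizes the application of the Muckenhoupt--Wheeden inequality on genuine Euclidean balls, is the technical heart of the argument.
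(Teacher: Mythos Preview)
Your proposal is correct and follows essentially the same Moser strategy as the paper: iterate on negative powers of $\overline U$ to bound $\inf_{\overline Q_\theta}\overline U$ below by a negative weighted $L^p$-average, then use the logarithmic Caccioppoli estimate, weighted Poincar\'e, and the Muckenhoupt--Wheeden John--Nirenberg inequality to bridge to a positive $L^p$-average. The one presentational difference is that for the first step the paper does not redo the negative-exponent iteration but instead observes that $V=\overline U^{-1}$ is itself a weak subsolution with bounded zero-order coefficient $\tilde a=(aU+b)/\overline U$ and applies Proposition~\ref{lem3.1} directly to $V$; your explicit iteration is the standard textbook route and gives the same conclusion.
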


\begin{proof}
Set $\overline U= U+k>0$, for some positive $k$ to be determined and $V=\overline U^{-1}$. Let $\Phi$ be any nonnegative function in $ H(t^{1-2\sigma},Q_1)$
with compact support in $Q_1\cup \pa'Q_1$. Multiplying both sides of first inequality of (\ref{3.4})  by $\overline U^{-2}\Phi$ and integrating by
parts, we obtain
\[
0\geq -\int_{Q_1}t^{1-2\sigma} \frac{\nabla U\nabla \Phi}{\overline U^{2}}
+2\int_{Q_1}t^{1-2\sigma}\nabla U\nabla \overline U \frac{\Phi}{\overline U^3}+\int_{\pa'Q_1}(aU+b)\overline U^{-2}\Phi.
\]
Note that $\nabla U=\nabla \overline U$ and $\nabla V=-\overline U^2\nabla \overline U$. Therefore, we have
\[
\int_{Q_1}t^{1-2\sigma}\nabla V\nabla \Phi+\int_{\pa'Q_1}\tilde a V\Phi\leq 0,
\]
where
\[
\tilde a=\frac{aU+b}{\overline U}.
\]
Choose $k=\|b\|_{L^\infty(Q_1)}$ if $b$ is not identical zero. Otherwise, choose an arbitrary $k>0$ and eventually let it tend to zero.
Note that $\|\tilde a\|_{L^\infty(Q_1)}\leq \|a\|_{L^\infty(Q_1)}+1$. Therefore Proposition \ref{lem3.1} implies that for any $\tau\in (\theta,1)$
and any $p>0$
\[
\dsup_{Q_\theta} V\leq C\| V\|_{L^p(t^{1-2\sigma},Q_\tau)},
\]
or,
\[
\begin{split}
\dinf_{Q_\theta}\overline U &\geq C\left(\int_{Q_\tau}t^{1-2\sigma}\overline U^{-p} \right)^{-\frac{1}{p}}\\&
=C\left(\int_{Q_\tau}t^{1-2\sigma}\overline U^{-p}\int_{Q_\tau}t^{1-2\sigma}\overline U^{p} \right)^{-\frac{1}{p}}
\left(\int_{Q_\tau}t^{1-2\sigma}\overline U^{p} \right)^{\frac{1}{p}},
\end{split}
\]
where $C>0$  depends only on $n,\sigma, p,\theta,\tau$.

The next key point is to show that there exists some $p_0>0$ such that
\[
\int_{Q_\tau}t^{1-2\sigma}\overline U^{-p_0}\int_{Q_\tau}t^{1-2\sigma}\overline U^{p_0}\leq C,
\]
where $C>0$  depends only on $n,\sigma,\tau$. We are going to show that for any
$\tau<1$
\be\label{3.5}
\int_{Q_\tau} e^{p_0|W|}\leq C,
\ee
where $W=\log\overline U-(\log\overline U)_{0,\tau}$. The idea is as usual. (\ref{3.5}) will follows from John-Nirenberg type
lemma (see \cite{MW}) if $W\in BMO(t^{1-2\sigma}\ud X)$.

We first derive an equation for $W$. Multiplying both sides of first inequality of (\ref{3.4})  by $\overline U^{-1}\Phi$ and integrating by
parts, we obtain
\[
\int_{Q_1}t^{1-2\sigma}|\nabla W|^2 \Phi\leq \int_{Q_1}t^{1-2\sigma}\nabla W\nabla \Phi+\int_{\pa'Q_1}\tilde a \Phi,
\]
where
\[
\tilde a=\frac{aU+b}{\overline U}.
\]
Replace $\Phi$ by $\Phi^2$. It follows from the Cauchy inequality and the Sobolev inequality that
\be\label{3.6}
\int_{Q_1}t^{1-2\sigma}|\nabla W|^2 \Phi^2\leq C \int_{Q_1}t^{1-2\sigma}|\nabla \Phi|^2,
\ee
where $C>0$ depends only on $n,\sigma$. Then for any $Q_{2r}(Y)\subset Q_1$, $Y\in \pa \mathbb{R}^{n+1}_+$, choose $\Phi$ with
\[
\mbox{supp} (\Phi)\subset Q_2r(Y)\cup \pa'Q_2r(Y), \quad \Phi=1\mbox{ in } Q_r(Y)\cup \pa'Q_r(Y),\quad |\nabla \Phi|\leq \frac{C}{r}.
\]
We have
\[
 \int_{Q_r(Y)}t^{1-2\sigma}|\nabla W|^2\leq \frac{C}{r^2}\int_{Q_{r}(Y)}t^{1-2\sigma}.
\]
Hence the Poincar\'e inequality, Lemma \ref{prop2.2}, implies
\[
\begin{split}
 \left(\int_{Q_{r}(Y)}t^{1-2\sigma}\right)^{-1}&\int_{Q_{r}(Y)}t^{1-2\sigma}|W-W_{Y,r}|\\&
\leq \left(\int_{Q_{r}(Y)}t^{1-2\sigma}\right)^{-1/2}
\left(\int_{Q_{r}(Y)}t^{1-2\sigma}|W-W_{Y,r}|^2\right)^{1/2}\\&
\leq r\left(\int_{Q_{r}(Y)}t^{1-2\sigma}\right)^{-1/2}
\left(\int_{Q_{r}(Y)}t^{1-2\sigma}|\nabla W|^2\right)^{1/2}\\&
\leq C.
\end{split}
\]
For other $Y\in Q_1$, one can verify the above similarly. Therefore, we conclude that $W\in BMO(t^{1-2\sigma}, Q_1)$.
\end{proof}

\begin{proof}[Proof of Theorem {\rm\ref{thm3.1}}] The proof follows from Proposition 3.1 and 3.2.
\end{proof}

\begin{proof}[Proof of Theorem {\rm\ref{thm1}}]
Since $u\geq 0$ in $\mathbb{R}^n$ be a solution of (\ref{1.1}), there exists a
nonnegative function $U(x,t)\in H(t^{1-2\sigma}, \mathbb{R}^{n+1}_{+})$
satisfying
\[
\mbox{div}(t^{1-2\sigma}\nabla U(x,t))=0\; \text{in}\;\mathbb{R}^{n+1}_{+}
\]
 and $U(x,0)=u(x).$
It follows from (\ref{eqn-DNm})  that
\[
\begin{split}
\lim_{t\rightarrow 0^+}t^{1-2\sigma}\pa_tU(x,t)&=-c_\sigma(-\Delta)^{\sigma}U(x,0)\\
&=-c_\sigma(a(x)U(x,0)+b(x)),
\end{split}
\]
 where we used $u\in C^2(B_1)$. Hence Theorem \ref{thm1} immediately follows from Theorem~\ref{thm3.1}.
\end{proof}

 \begin{thm} Let $0<\sigma<1$ and $B_{R}=B_{R}(0)\subset \mathbb{R}^{n}$, $n>2\sigma$.
Suppose that $a(x)\in L^{\infty}(\mathbb{R}^{n})$,
$0\le u\in C(\mathbb{R}^{n})$ satisfies
\[
(-\Delta )^{\sigma}u(x)=a(x)u(x),\quad x\in B_{R}.
\]
Then for $\delta>0$, there exists $C(n,\sigma,\delta)>0$ such that
\[
\max_{\overline{B}_{R-\delta}}u\le C(n,\sigma,\delta)\min_{\overline{B}_{R-\delta}}u.
\]
\end{thm}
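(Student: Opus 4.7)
The plan is to derive this global Harnack inequality from Theorem~\ref{thm1} by combining a scaling argument with a standard Harnack chain (chain of balls) argument. Theorem~\ref{thm1} gives the inequality on $B_{1/2}$ from $B_1$ with a constant depending on $\|a\|_{L^\infty(B_1)}$, so the task is to propagate this local estimate to the interior set $\overline{B}_{R-\delta}$.

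\textbf{Step 1 (Rescaling).} For $x_0 \in \overline{B}_{R-\delta}$ and any $0 < r \le \delta$ we have $B_r(x_0) \subset B_R$. Define $v(y) := u(x_0+ry)$ for $y \in \mathbb{R}^n$. Then $v \ge 0$ on all of $\mathbb{R}^n$ and satisfies
\[
(-\Delta)^\sigma v(y) = r^{2\sigma}\, a(x_0+ry)\, v(y) =: \tilde a(y)\, v(y) \quad \text{in } B_1,
\]
with $\|\tilde a\|_{L^\infty(B_1)} \le r^{2\sigma}\|a\|_{L^\infty(\mathbb{R}^n)}$. Applying Theorem~\ref{thm1} with $b\equiv 0$ and translating back yields the scaled local Harnack inequality
\[
\sup_{B_{r/2}(x_0)} u \le C_1 \inf_{B_{r/2}(x_0)} u,
\]
where $C_1 = C_1(n,\sigma,r^{2\sigma}\|a\|_{L^\infty})$.

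\textbf{Step 2 (Harnack chain).} Fix $r = \delta$. Given any two points $x_1, x_2 \in \overline{B}_{R-\delta}$, join them by the straight segment (which lies inside $\overline{B}_{R-\delta}$ by convexity) and discretize it as $x_1 = z_0, z_1, \dots, z_N = x_2$ with $|z_i - z_{i+1}| < r/4$. The number of hops is bounded by
\[
N \le \Big\lceil \tfrac{8R}{\delta} \Big\rceil =: N(n,R,\delta).
\]
Each $z_i$ lies in $\overline{B}_{R-\delta}$, so $B_r(z_i) \subset B_R$, and $z_{i+1} \in B_{r/2}(z_i)$.

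\textbf{Step 3 (Iteration).} Applying the rescaled Harnack inequality from Step~1 at each $z_i$,
\[
u(z_i) \le \sup_{B_{r/2}(z_i)} u \le C_1 \inf_{B_{r/2}(z_i)} u \le C_1\, u(z_{i+1}).
\]
Chaining these inequalities gives $u(x_1) \le C_1^N u(x_2)$. Taking the supremum over $x_1$ and the infimum over $x_2$ in $\overline{B}_{R-\delta}$ produces the desired estimate with $C(n,\sigma,\delta) := C_1^N$ (which also depends on $R$ and $\|a\|_{L^\infty}$ through $C_1$ and $N$).

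The only subtle point is that Theorem~\ref{thm1} formally requires $u \in H^\sigma(\mathbb{R}^n) \cap C^2(B_1) \cap C^1(\overline{B}_1)$, whereas here one only assumes $u \in C(\mathbb{R}^n)$ satisfying the equation. Under the boundedness of $au$ as a source term, standard interior regularity for the fractional Laplacian upgrades $u$ to be $C^2$ inside $B_R$ (and belongs to $H^\sigma_{\mathrm{loc}}$), so Theorem~\ref{thm1} may indeed be applied to the rescaled $v$ on $B_1$. This regularity issue is the only real obstacle; the scaling and the covering arguments themselves are routine.
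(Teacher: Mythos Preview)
Your proposal is correct and follows essentially the same approach as the paper: the paper's own proof consists of the single sentence ``By rescaling, we can prove it from Theorem~\ref{thm1},'' and your Step~1 rescaling together with the Harnack chain of Steps~2--3 is precisely the standard way to make that sentence rigorous. Your remark that the resulting constant also depends on $R$ and $\|a\|_{L^\infty}$, and your comment on the regularity discrepancy between the hypotheses of Theorem~\ref{thm1} and those of the present statement, are both accurate observations about the paper's formulation rather than defects of the argument.
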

\begin{proof} By rescaling, we can prove it from Theorem \ref{thm1}. See another proof in
\cite{CS10}.
 \end{proof}

\bigskip

\bigskip

 \noindent {\bf Acknowledgements:} 
 Both authors would like to thank Prof. J. Bao and Prof. Y.Y. Li for their encouragement. 

\bigskip

\bigskip

\end{document}